\def\cald{{\mathcal{D}}}
\def\call{{\mathcal{L}}}
\def\calt{{\mathcal{T}}}
\def\calx{{\mathcal{X}}}
\def\({\left(}
\def\){\right)}
\def\vsp{\vspace*{1,5mm}\\ }
\def\bk{\bigskip }
\def\mk{\medskip }
\def\sk{\smallskip }
\def\n{\noindent }
\def\dd{\displaystyle}
\def\barr{\begin{array}}
	\def\earr{\end{array}}
\def\FP{Fokker--Planck}
\newtheorem{theorem}{Theorem}[section]
\newtheorem{lemma}[theorem]{Lemma}
\theoremstyle{definition}
\newtheorem{remark}[theorem]{Remark}
\def\1{^{-1}}
\def\vsp{\vspace*{2mm}\\ }
\def\calf{{\mathcal{F}}}
\def\calo{{\mathcal{O}}}
\def\calp{{\mathcal{P}}}
\def\calx{{\mathcal{X}}}
\def\rr{{\mathbb{R}}}
\def\rrd{{\mathbb{R}^d}}
\def\nn{{\mathbb{N}}}
\def\9{{\infty}}
\def\lbb{{\lambda}}
\def\ov{\overline}
\def\vf{{\varphi}}
\def\oo{{\omega}}
\def\ooo{{\Omega}}
\def\pp{{\partial}}
\def\vp{{\varepsilon}}
\def\dd{\displaystyle}
\def\bk{\bigskip }
\def\sk{\smallskip}
\def\n{\noindent }
\def\vsp{\vspace*{2mm}\\ }
\def\ff{\forall }
\def\({\left(}
\def\){\right)}
\def\<{\left<}
\def\>{\right>}
\def\divv{{\rm div}}
\title{Nonlinear Fokker--Planck equations\\ as smooth Hilbertian gradient flows} 
\author{Viorel Barbu\thanks{Al.I. Cuza University and Octav Mayer Institute of Mathematics of  Romanian Academy, Ia\c si, Romania.  Email: vbarbu41@gmail.com} \and Michael Röckner\footnote{Faculty of Mathematics, Bielefeld University, 33615 Bielefeld, Germany. E-mail: roeckner@math.uni-bielefeld.de}}
\date{\textit{In memory of Giuseppe Da Prato}}
\begin{document}
	\maketitle
	\begin{abstract}
	\n Under suitable assumptions on $\beta:\rr\!\to\!\rr, \,D:\rrd\!\to\!\rrd$ and $b:\rrd\!\to\!\rr$,\break the nonlinear \FP\ equation $u_t-\Delta\beta(u)+\divv(Db(u)u)=0$, in $(0,\9)\times\rrd$ where $D=-\nabla\Phi$, 
	 can be identified as a smooth gra\-dient flow $\frac{d^+}{dt}\,u(t)+\nabla E_{u(t)}=0$, $\ff t>0$. Here, $E:\calp^*\cap L^\9(\rrd)\to\rr$ is the energy function associated to the equation, where $\calp^*$ is a certain convex subset of the space of probability densities. $\calp^*$ is invariant under the flow and $\nabla E_u$ is the gradient of $E$, that is, the tangent vector field to $\calp$ at $u$ defined by $\<\nabla E_u,z_u\>_u={\rm diff}\,E_u\cdot z_u$ for all vector fields $z_u$ on $\calp^*$, where $\<\cdot,\cdot\>_u$ is a scalar product on a suitable tangent space $\mathcal{T}_u(\calp^*)\subset\cald'(\rrd)$.\sk\\
	{\bf MSC:} 60H15, 47H05, 47J05.\\
		{\bf Keywords:} Fokker--Planck  equation, gradient flow, semigroup, stochastic equations, tensor metric.  
	\end{abstract}

	\section{Introduction}\label{s1}
	We are concerned here with the nonlinear \FP\ equation (NFPE)	
	\begin{equation}\label{e1.1}
	\barr{l}
	u_t-\Delta\beta(u)+{\rm div}(Db(u)u)=0\mbox{ in }(0,\9)\times\rr^d,\\
	u(0,x)=u_0(x),\ x\in\rrd,\vsp
	\earr\end{equation}where $\beta:\rr\to\rr,\ D:\rrd\to\rrd$, $d\ge1,$  and $b:\rr\to\rr$ are assumed to satisfy the following hypotheses 
	\begin{itemize}
		\item[\rm(i)] $\beta\in C^1(\rr)$, $\beta(0)=0,\ 0<\gamma_1\le\beta'(r)\le\gamma_2<\9$, $\ff r\in\rr$.
		\item[\rm(ii)] $b\in C_b(\rr)\cap C^1(\rr)$ and $b(r)\ge b_0>0,$ $|b'(r)r+b(r)|\le\gamma_3<\9,\ \ff r\in\rr.$
		\item[\rm(iii)] $D\in L^\9(\rrd;\rrd)\cap W^{1,1}_{\rm loc}(\rrd;\rrd)$ and  
		$\divv\ D\in L^2(\rrd)+L^\9(\rrd).$ 
		\item[\rm(iv)] $D=-\nabla\Phi$, where $\Phi\in C(\rrd)\cap W^{2,1}_{\rm loc}(\rrd),$ $\Phi\ge1$, $\lim\limits_{|x|\to\9}\Phi(x)=+\9,$ $\Phi^{-m}\in L^1(\rrd)$ for some $m\ge2$.					
	\end{itemize}
NFPE \eqref{e1.1} is modeling  the so called {\it anomalous diffusion} in statistical physics (see, e.g., \cite{7}) and also describes the dynamics of It\^o stochastic processes in terms of their probability densities. In fact, if $u$ is a distributional solution to \eqref{e1.1}, such that $t\to u(t)dx$ is weakly continuous  and $u(t)\in\calp$, $\ff t\ge0$, then there is a probabilistically weak solution $X_t$ to the McKean--Vlasov stochastic differential equation 
\begin{equation}\label{e1.2}
dX_t=D(X_t)b(u(t,X_t))dt+
\(\dd\frac{2\beta(u(t,X_t))}{u(t,X_t)}\)^{\frac12} dW_t,\\
\end{equation}on a probability space $(\ooo,\calf,\mathbb{P},W_t)$ with normal filtration $(\calf_t)_{t\ge0}$. More exactly, one has  $\call_{X_t}\equiv u(t,x)$, where  $\call_{X_t}$ is the density of the marginal law $\mathbb{P}\circ X^{-1}_t$ of $X_t$ with respect to the Lebesgue measure (see \cite{3}, \cite{5b}).  

The function $u:[0,\9)\times\rrd\to\rr$ is called a {\it mild solution} to \eqref{e1.1} if it is $L^1$-continuous, that is $u\in C([0,\9);L^1(\rrd))$, and 
\begin{equation}\label{e1.3}
u(t)=\lim_{h\to0}u_h(t)\mbox{ in }L^1(\rrd),\ \ff t\ge0\end{equation}where, for each $T>0$, $u_h:(0,T)\to L^1(\rrd)$ is defined by
\begin{equation}\label{e1.4}
\barr{rcl}
u_h(t)&=&u^j_h,\ t\in[jh,(j+1)h),\ j=0,1,...,\mbox{$\left[\frac Th\right]$},\vsp
u^{j+1}_h+hAu^{j+1}_h&=&u^j_h,\ j=0,1,...,\mbox{$\left[\frac Th\right]$};\ u^0_h=u_0.\earr\end{equation}Here, $A:L^1(\rrd)\to L^1(\rrd)$ is the operator
\begin{equation}\label{e1.5}
\barr{rcl}
Ay&=&-\Delta\beta(y)+\divv(Db(y)y)\mbox{ in }\cald'(\rrd);\ y\in D(A),\vsp
D(A)&=&\{y\in L^1(\rrd);\ -\Delta\beta(y)+\divv(Db(y)y)\in L^1(\rrd)\}.\earr\end{equation}As shown in \cite{5} (see also \cite{3}--\cite{4}, \cite{5b}), under the above hypotheses (as a matter of fact, for less restrictive assumptions), the domain $D(A)$ is dense in $L^1(\rrd)$, that is,  $\ov{D(A)}=L^1(\rrd)$, and the operator $A$ is $m$-accretive in $L^1(\rrd)$, which means that (see, e.g., \cite{1}, \cite{2}) 

$$\barr{c}
R(I+\lbb A)=L^1(\rrd),\ \ff\lbb>0,\vsp 
\|(I+\lbb A)\1 y_1-(I+\lbb A)\1 y_2\|_{L^1(\rrd)}\le\|y_1-y_2\|_{L^1(\rrd)},\vsp\hfill 
\ff\lbb>0,\ y_1,y_2\in L^1(\rrd).\earr$$Then, by the Crandall \& Liggett theorem (see \cite{1},  \cite{2}, p. 56) the Cauchy pro\-blem
\begin{equation}\label{e1.6}
\frac{du}{dt}+Au=0,\ t\ge0;\ \ u(0)=u_0,\end{equation}has, for each $u_0\in L^1(\rrd)$ a unique  solution $u=u(t,u_0)$ in the mild sense   \eqref{e1.3}--\eqref{e1.4}. Equivalently,
\begin{equation}\label{e1.7}
u(t,u_0)=\lim_{n\to\9}\(I+\frac tn\,A\)^{-n}u_0\mbox{ in }L^1(\rrd),\end{equation}uniformly on the compact intervals of $[0,\9)$. 

Moreover, the map $t\to u(t,u_0)$, denoted $S(t)u_0$, is a {\it continuous semigroup} of contractions on $L^1(\rrd)$, that is, $$\barr{c}
S(t+s)=S(t)S(s)\mbox{ for all $s,t\ge0$,}\vsp
\|S(t)u_1-S(t)u_2\|_{L^1(\rrd)}\le\|u_1-u_2\|_{L^1(\rrd)}, \ \ff t\ge0,\ u_1,u_2\in L^1(\rrd),\vsp
\dd\lim_{t\to0}S(t)u_0=u_0\mbox{ in }L^1(\rrd).\earr$$Note also (see \cite{3}--\cite{5b}) that \begin{eqnarray}
& S(t)(L^1(\rrd)\cap L^\9(\rrd))\subset L^1(\rrd)\cap L^\9(\rrd),\ \ff t\ge0,\label{e1.7b}\\[1mm]
&S(t)(L^1(\rrd)\cap L^1(\rrd;\Phi dx))\subset L^1(\rrd)\cap L^1(\rrd;\Phi dx),\label{e1.8a}\\[1mm]
&S(t)u_0\in L^\9((0,T)\times\rrd),\ \ff T>0,\ \ff u_0\in L^1(\rrd)\cap L^\9(\rrd),\label{e1.7c}
\end{eqnarray}
and $S(t)\calp\subset\calp,$ $ \ff t\ge0,$ where
\begin{equation}\label{e1.8}
\calp=\left\{y\in L^1(\rrd),\ y(x)\ge0,\mbox{ a.e. } x\in\rrd;\int_\rrd y(x)dx=1\right\}.\end{equation}We also note that, though  $t\to u(t)=S(t)u_0$ is not differentiable,  it is, however,  a Schwartz-distributional solution to \eqref{e1.1}, that~is,

\begin{equation}\label{e1.9}
	\barr{c}
\dd\int^\9_0\int_\rrd(u\vf_t+\beta(u)\Delta_x\vf+b(u)uD\cdot\nabla_x\vf)dx\,dt\\
+\dd  \int_\rrd u_0(x)\vf(0,x)dx=0,\earr\end{equation}for all $\vf\in C^\9_0([0,\9)\times\rrd).$  

Moreover, as shown in \cite{5} (see also \cite{5b}), $S(t)u_0$ is the unique distributional solution to NFPE \eqref{e1.1} in the class of functions $u\in L^1((0,\9)\times\rrd)\cap L^\9((0,\9)\times\rrd)$  such that 
$t\to u(t)dx$ is  weakly continuous on $[0,\9)$.  In~particular, this implies (see, e.g., \cite{5} and \cite{5b}, Chapter~5) that the McKean--Vlasov equation \eqref{e1.2} has a unique strong solution $X_t$ with the marginal law~$u(t,\cdot)$. 

The purpose of this work is to represent the solution $t\to S(t)u_0$ to \eqref{e1.1} as a {\it subgradient flow} of the entropy functional (energy)
\begin{equation}\label{e1.10}
\barr{rl}
E(u)\!\!\!&=\dd\int_\rrd(\eta(u(x))+\Phi(x)u(x))dx,\ u\in\calp\cap L^\9(\rrd)\cap L^1(\rrd;\Phi dx),\vsp 
\eta(r)\!\!\!&=\dd\int^r_0\int^s_1\frac{\beta'(\tau)}{\tau b(\tau)}\,d\tau\,ds,\ r\ge0,\earr \hspace*{-10mm}
\end{equation}
with the tangent space $\calt_u(\calp^*)\subset\cald'(\rrd)$ defined in \eqref{e3.1} below, for  $u\in\calp^*$. Here,  
\begin{equation}\label{e1.10b}
\calp^*=\left\{
\barr{r}
u\in\calp\cap L^\9\cap L^1(\rrd;\Phi dx);\sqrt{u}\in H^1(\rrd),\dd\frac\psi u\in L^1(\rrd)\vsp 
\mbox{ for some }\psi\in\calx\earr\right\},\end{equation} 
where we set $\frac10:=+\9$ and
\begin{equation}\label{e1.13a}	
\hspace*{-5mm}\calx\!=\!\left\{\!\psi\!\in\! C^2(\rrd)\!\cap\! C_b(\rrd)\!\cap\! L^1(\rrd),\psi\!>\!0,\frac{\nabla\psi}{\psi}\!\in\! L^\9(\rrd),\frac1\psi\!\in\! L^1_{\rm loc}(\rrd)\!\right\}\!.\hspace*{-5mm}
\end{equation}
We also note that the function $E$ is convex and lower semicontinuous on $L^2(\rrd)$ with the domain 
$$D(E)=\{u\in\calp\cap L^\9(\rrd)\cap L^\9\cap L^1(\rrd;\Phi dx)\}.$$
The class $\calx$ is clearly nonempty and, in particular, it contains all functions $\psi$ of the form  $\psi(x)=(\alpha_1|x|^m+\alpha_2)\1$, $\alpha_1,\alpha_2$ and $m>d$ and, therefore, since $\calx$ is an algebra containing the constants, it is a rich class of functions. Hence, so is $\calp^*$ since if $\psi\in\calx$, $\psi>0$, $u:=\psi^2\(\int_\rrd\psi^2dx\)\1$ is easily checked to be in $\calp^*$. We also note that $\calp^*$ is convex. The gradient flow representation means that, for $u(t)=S(t)u_0$, $u_0\in\calp^*$, we have
\begin{equation}\label{e1.14}
\frac d{dt}\,u(t)=-\nabla E_{u(t)},\ t>0,
\end{equation}
where $\nabla  E_u\in \calt_u(\calp^*)$ is the  gradient of $E$ in the sense of the Riemannian type geo\-me\-try of $\calp$ to be defined later on. Such a result was recently established in \cite{10} (see also \cite{1prim}, \cite{1secund}, \cite{16})  on the  manifold $\calp$ endowed with the topology of weak convergence of probability measures and tangent bundle $L^2(\rrd;\rrd;\mu)_{\mu\in\calp}$ and in the fundamental work \cite{9} for the classical porous media equation. But we want to emphasize that we consider here the smaller space $\calp^*\subset\calp$ with the tangent bundle $(\calt_u(\calp^*))_{u\in\calp^*}$ defined in \eqref{e3.1} and scalar product \eqref{e3.2} which is different from the one in \cite{1prim}, \cite{1secund}, \cite{9}, \cite{10},  \cite{16}. Herein, we shall obtain a representation of the form \eqref{e1.14} for NFPE \eqref{e1.1}. This result is based on the smoothing effect on initial data of the semigroup $S(t)$ in the space $H\1(\rrd)$ which will be proved in Section \ref{s1}. As a matter of fact, the space $H\1(\rrd)$ is a viable alternative to $L^1(\rrd)$ for proving the well-posedness of NFPE \eqref{e1.1}. In fact, as seen below, the operator \eqref{e1.5} has a quasi-$m$-accretive version in $H\1(\rrd)$, which generates a $C_0$-semigroup of quasi-contractions which coincides with $S(t)$ on $L^1(\rrd)\cap L^\9(\rrd)$. 

We recall that (see, e.g., \cite{1}, \cite{2}), if $H$ is a Hilbert space with the scalar product $(\cdot,\cdot)_H$ and norm $|\cdot|$, the operator $B:D(B)\subset H\to H$ is said to be $m$-accretive if 
$$(Bu_1-Bu_2,u_1-u_2)\ge0,\ \ff  u_i\in D(B),\ i=1,2,$$
and $R(I+\lbb B)=H$, $\ff\lbb>0$. It is said to be quasi $m$-accretive if $B+\oo f$ is $m$-accretive for some $\oo\ge0$.

\bk\noindent{\bf Notation.} $L^p(\rrd),$ $1\le p\le\9$ (denoted $L^p$) is the space of Lebesgue measurable and $p$-integrable functions on $\rr^d$, with the standard norm $|\cdot|_p$. $(\cdot,\cdot)_2$ denotes the inner product in $L^2$. By $L^p_{\rm loc}$ we denote the corresponding local space. Let $C^k(\rr)$ denote the space of continuously differentiable functions up to order $k$ and $C_b(\rr)$ the space of continuous and bounded functions on $\rr$. For any open set $\calo\subset\rr^m$ let $W^{k,p}(\calo)$, $k\ge1$, denote the standard Sobolev space on $\calo$ and by $W^{k,p}_{\rm loc}(\calo)$ the corresponding local space. We set $W^{1,2}(\calo)=H^1(\calo),$ $W^{2,2}(\calo)=H^2(\calo)$, $H^1_0(\calo)=$ $\mbox{$\{u\in H^1(\calo),\,u=0$}\mbox{ on }\pp\calo\}$, where $\pp\calo$ is the boundary of $\calo$. By $H\1(\calo)$ we denote the dual space of $H^1_0(\calo)$ (of~$H^1(\rr^m)$, respectively, if $\calo=\rr^m$). We shall also set $H^1=H^1(\rrd)$ and $H\1=H\1(\rrd)$. 
 $C^\9_0(\calo)$ is the space of infinitely differentiable real-valued functions with compact support in $\calo$ and $\cald'(\calo)$ is the dual of $C^\9_0(\calo)$, that is, the space of Schwartz distributions on $\calo$. ${\rm Lip}(\rr)$ is the space of  real-valued Lipschitz functions on $\rr$ with the norm denoted by $|\cdot|_{\rm Lip}$. The space $H\1$ is endowed with the scalar product  
 $$\<y_1,y_2\>_{-1}=((I-\Delta)\1y_1,y_1)_2,\ \ff y_1,y_2\in H\1,$$
 and the Hilbert norm $|y|^2_{-1}=\<y,y\>_{-1}.$ 
By ${}_{H\1}(\cdot,\cdot)_{H^1}$ we denote the dua\-li\-ty pairing on $H^1\times H\1.$ If $Y$ is a Banach space,  
  then $C([0,\9);Y)$ is the space of continuous functions  $y:[0,\9)\to Y$ and $C_w([0,\9);Y)$ is the space of weakly continuous $Y$-valued functions. Furthermore, let  $C^\9_0([0,\9)\times\rrd)$ denote the space of all $\vf\in C^\9([0,\9)\times\rrd)$ such that $support\,\vf\subset K$, where $K$ is compact in $[0,\9)\times\rrd.$ If $u:[0,\9)\to H\1$ is a given function, we shall denote its $H\1$-strong derivative in $t$ by $\frac{du}{dt}\,(t)$, and  the right derivative by $\frac{d^+}{dt}\,u(t)$. 
    We  shall also use the following  notations     
 $$\barr{c}
\beta'(r)\equiv\dd\frac d{dr}\,\beta(r),\ b'(r)=\frac d{dr}\,b(r),\ b^*(r)\equiv b(r)r,\ r\in\rr,\\ 
y_t=\dd\frac\pp{\pp t}\,y,\ \nabla y=\left\{\frac{\pp y}{\pp x_i}\right\}^d_{i=1},\ \Delta y=\sum^d_{i=1}\frac{\pp^2}{\pp^2x_i}\,y,\earr$$ 
$$\divv\,y=\dd\sum^d_{i=1}\frac{\pp y_i}{\pp x_i},\ y=\{y_i\}^d_{i=1},$$for $y=y(t,x),\ (t,x)\in[0,\9)\times\rrd$, where $\Delta$ and $\divv$ are taken in the sense of the distribution space $\cald'(\rrd)$.

\section{The $H^1$-regularity of the semigroup $S(t)$}\label{s2}
\setcounter{equation}{0}

Consider the semigroup $S(t):L^1\to L^1$ defined earlier by the exponential formula \eqref{e1.7}. 
Define the operator $A^*:H\1\to H\1,$ 
\begin{equation}\label{e2.1}
A^*y=-\Delta\beta(y)+\divv(Db^*(y)),\ \ff y\in D(A^*),\end{equation}with the domain $D(A^*)=H^1.$  More precisely, for each $y\in H^1$,  $A^*y\in H\1$ is defined by
\begin{equation}
\label{e2.1a}
{}_{H\1}(A^*y,\vf)_{H^1}=\int_\rrd(\nabla\beta(y)-Db^*(y))\cdot\nabla\vf\,dx,\ \ff\vf\in H^1.
\end{equation}
As mentioned earlier, the semigroup $S(t)$ is not differentiable in $L^1$, but as shown below it is, however, $H\1$-differentiable on the right on $(0,\9)$. 

Namely, we have

\begin{theorem}\label{t1} Assume that Hypotheses {\rm(i)--(iv)} hold. Then, for each $u_0\in\calp\cap L^\9$, the function $u(t)=S(t)u_0$ is in $C([0,\9);H^{-1})\cap C_w([0,\9);L^2)$, it is $H\1$-right differentiable on $(0,\9)$ with $\frac{d^+}{dt}\,u(t)$ being  $H\1$-continuous from the right on $(0,\9)$, $S(t)u_0\in H^1$, $t>0$,  and	
\begin{equation}\label{e2.2}
\frac{d^+}{dt}\,S(t)u_0+A^*S(t)u_0=0,\ \ff t>0.\end{equation}
Furthermore, $S(t)u_0\in\calp\cap L^\9$, $\ff t\ge0$, $\frac{d}{dt}\,S(t)u_0$ exists on $(0,\9)\setminus N$, where $N$ is an at most countable subset of $(0,\9)$, 
\begin{equation}\label{e2.3}
\frac d{dt}\,S(t)u_0+A^*S(t)u_0=0,\ \ff t\in(0,\9)\setminus N,
\end{equation}and  $t\to A^*S(t)u_0$ is $H\1$-continuous on $(0,\9)\setminus N$.

 Moreover, $\sqrt{S(t)u_0}\in H^{1}(\rrd)$, a.e. $ t>0$, that is, 
 \begin{eqnarray}
&\dd\dd\frac{\nabla S(t)u_0}{\sqrt{S(t)u_0}}\in L^2,\ \mbox{a.e. } t\in(0,\9),\label{e2.4}\end{eqnarray}
 \begin{eqnarray}
 	& E(S(t)u_0)<\9, \mbox{ a.e. } t\in(0,\9).\label{e2.4a}\end{eqnarray}
for all $u_0\in\calp$ such that $u_0\log u_0\in L^1$. If $u_0\in H^1$, then \eqref{e2.2} holds for all $t\ge0$,  
$t\to S(t)u_0$ is 
locally $H^{-1}$-Lipschitz, on $[0,\9)$ and $u(t)\in H^1$, $\ff t\ge0$.

Finally, if $u_0\in \calp^*$, then
\begin{equation}\label{e2.5a}
S(t)u_0\in\calp^*,\ \ff t\ge 0.
\end{equation}
\end{theorem}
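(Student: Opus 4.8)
To establish \eqref{e2.5a} we check, for every $t\ge0$, the conditions defining $\calp^*$ in \eqref{e1.10b} for $u(t):=S(t)u_0$. That $u(t)\in\calp\cap L^\9\cap L^1(\rrd;\Phi\,dx)$ is immediate from $S(t)\calp\subset\calp$ together with \eqref{e1.7b} and \eqref{e1.8a}. Next, every $w\in\calp\cap L^\9\cap L^1(\rrd;\Phi\,dx)$ satisfies $w\log w\in L^1$: the contribution of $\{w\ge1\}$ is controlled by $w\in L^1\cap L^\9$, while on $\{w<1\}$ one has the pointwise bound $w\log\frac1w\le\Phi w+\Phi e^{-\Phi}$, and $\int_\rrd\Phi e^{-\Phi}dx<\9$ because $\Phi\ge1$ and $\Phi^{-m}\in L^1$. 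In particular $u_0\log u_0\in L^1$, so \eqref{e2.4} applies and yields $\sqrt{u(t)}\in H^1$ for a.e. $t>0$. The remaining, and genuinely new, requirement is the reciprocal integrability: we must exhibit some $\psi\in\calx$ with $\psi/u(t)\in L^1$ for every $t>0$, and I claim the fixed weight $\psi_0\in\calx$ witnessing $u_0\in\calp^*$ works for all $t$.

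The plan is to bound $J(t):=\int_\rrd\psi_0\,u(t)\1dx$ by a Gronwall inequality, working with the regularisations $J_\vp(t):=\int_\rrd\psi_0(u(t)+\vp)\1dx$, which are finite since $\psi_0\in L^1$. Differentiating in $t$, substituting $u_t=\Delta\beta(u)-\divv(Db^*(u))$ from \eqref{e2.3}, and integrating by parts once (the boundary terms vanishing by a cut-off argument using $\psi_0\in L^1\cap C_b$ and $u(t)\in H^1$), I obtain the dissipation term $-2\int_\rrd\frac{\psi_0\beta'(u)}{(u+\vp)^3}\,|\nabla u|^2\,dx\le0$ together with three remainders: the diffusion contributes $\int_\rrd\frac{\beta'(u)}{(u+\vp)^2}\,\nabla\psi_0\cdot\nabla u\,dx$, and the drift contributes $-\int_\rrd\frac{b(u)u}{(u+\vp)^2}\,\nabla\psi_0\cdot D\,dx$ and $2\int_\rrd\frac{\psi_0\,b(u)u}{(u+\vp)^3}\,\nabla u\cdot D\,dx$.

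The decisive point is that all three remainders are dominated by the dissipation term and by $J_\vp$, uniformly in $\vp$. Writing $\nabla\psi_0=\psi_0(\nabla\psi_0/\psi_0)$ and using the elementary bounds $u(u+\vp)^{-2}\le(u+\vp)\1$ and $u^2(u+\vp)^{-3}\le(u+\vp)\1$, the two remainders carrying a factor $\nabla u$ are absorbed into the dissipation term by Young's inequality, and their leftovers, as well as the gradient-free drift term, are bounded by $C\,J_\vp(t)$ with $C$ depending only on the constants in Hypotheses (i)--(iii) and on $|\nabla\psi_0/\psi_0|_\9$ (finite since $\psi_0\in\calx$). Thus $\frac{d}{dt}J_\vp(t)\le C\,J_\vp(t)$ for a.e. $t>0$, whence $J_\vp(t)\le J_\vp(0)e^{Ct}\le J(0)e^{Ct}$; letting $\vp\downarrow0$ by monotone convergence gives $J(t)\le J(0)e^{Ct}<\9$ for all $t\ge0$. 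In particular $u(t)>0$ a.e. and $\psi_0/u(t)\in L^1$, which is the missing condition.

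The algebra above is routine; the \emph{main obstacle} is its rigorous justification. Since \eqref{e2.3} holds only off the countable set $N$ and only as an identity in $H\1$, the differentiation of $J_\vp$ must proceed through the absolute continuity of $t\mapsto u(t)$ in $H\1$ furnished by Theorem \ref{t1}, and the integrations by parts require a genuine truncation argument based on $u(t)\in H^1$ and $\sqrt{u(t)}\in H^1$. A second, more delicate, issue is upgrading $\sqrt{u(t)}\in H^1$ from a.e. $t$ to every $t>0$; this does not follow from the energy bound \eqref{e2.4a} alone, since the Fisher information is the entropy dissipation and is only time-integrable. Both difficulties are most cleanly handled at the level of the resolvent: one shows directly from the elliptic equation $v-\lbb\Delta\beta(v)+\lbb\divv(Db^*(v))=w$, testing with $-\psi_0(v+\vp)^{-2}$ and with suitable multipliers of $-\Delta\beta(v)$, that $v=(I+\lbb A)\1w$ satisfies both $\int_\rrd\psi_0 v\1dx\le(1+C\lbb)\int_\rrd\psi_0 w\1dx$ and a uniform Fisher-information bound whenever $w\in\calp^*$, and then iterates and passes to the limit in the exponential formula \eqref{e1.7}. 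This bypasses all differentiability issues and is the route I would ultimately follow.
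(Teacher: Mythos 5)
Your proposal addresses only the final assertion \eqref{e2.5a} of the theorem; everything before it is consumed as an input rather than proved. Concretely, the paper's proof of the earlier assertions runs through Lemma \ref{l2} (quasi-$m$-accretivity of $A^*$ in $H\1$, via monotonicity and coercivity of the operator $G_\lbb$ in $L^2$), the Crandall--Liggett construction of the $H\1$-semigroup $S^*(t)$ and its right-differentiability, the a priori estimate \eqref{e2.9} giving the smoothing $S^*(t)u_0\in H^1$ for $t>0$, the identification $S(t)=S^*(t)$ on $L^1\cap L^2$ (proved by an $L^1$-contraction argument for the implicit scheme), the monotonicity of $t\mapsto e^{-\oo t}|A^*S^*(t)u_0|_{-1}$ yielding the countable exceptional set $N$, and the energy inequality \eqref{e2.17} from which \eqref{e2.4}--\eqref{e2.4a} follow. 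None of this appears in your proposal, yet your argument for \eqref{e2.5a} explicitly uses \eqref{e2.3}, \eqref{e2.4} and $u(t)\in H^1$; as a proof of the stated theorem this is the principal gap. (Your side remark that $u_0\in\calp\cap L^\9\cap L^1(\rrd;\Phi\,dx)$ already forces $u_0\log u_0\in L^1$, via $w\log\frac1w\le\Phi w+\Phi e^{-\Phi}$ and $\Phi e^{-\Phi}\le C\Phi^{-m}$, is correct and a useful point the paper leaves implicit.)

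For \eqref{e2.5a} itself, the computation you sketch coincides with the paper's: differentiate $\int_\rrd\psi\,(u(t)+\vp)\1dx$, insert the equation, integrate by parts to isolate the dissipation term $-2\int_\rrd\beta'(u)\psi|\nabla u|^2(u+\vp)^{-3}dx$, absorb the three remainders using $|\nabla\psi/\psi|_\9<\9$, $u(u+\vp)^{-2}\le(u+\vp)\1$ and Young's inequality, and close with Gronwall and Fatou as $\vp\downarrow0$. Where you wave at ``a cut-off argument,'' the paper works with $\vf_n(x)=\eta(|x|^2/n)\psi(x)$ and controls the extra terms by $n^{-1/2}$ times quantities that are finite precisely because of \eqref{e2.9} (here $\psi\in L^1\cap L^\9\subset L^2$ and $\nabla u\in L^2(0,T;L^2)$ are used), and it justifies the time-differentiation by a chain rule for $H\1$-valued locally Lipschitz curves with $\frac{du}{dt}\in L^2(0,T;H\1)$ rather than by invoking \eqref{e2.3} pointwise. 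Your proposed fallback --- proving $\int_\rrd\psi v\1dx\le(1+C\lbb)\int_\rrd\psi w\1dx$ for $v=(I+\lbb A)\1w$ and iterating through the exponential formula --- is a genuinely different and plausible implementation, but it is not carried out, and the same truncation and integration-by-parts issues reappear at the elliptic level. Your concern about upgrading $\sqrt{u(t)}\in H^1$ from a.e.\ $t$ to all $t$ is legitimate but likewise left unresolved; the paper's own proof of \eqref{e2.5a} reduces the matter to showing $\psi/u(t)\in L^1$ for all $t\ge0$ and otherwise relies on $S(t)u_0\in H^1$ for all $t>0$ from \eqref{e2.12}.
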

In particular, it follows by Theorem \ref{t1} that the semigroup $S(t)$ is ge\-ne\-rated by the opertor $-A^*$ in the space $H\1$.  

We shall prove Theorem \ref{t1} in several steps, the first one being the following lemma.
	
\begin{lemma}\label{l2} The operator $A^*$ is quasi-$m$-accretive in $H\1$, that is, $A^*+\oo I$ is $m$-accretive for some $\oo\ge0$.
\end{lemma}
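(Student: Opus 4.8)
The plan is to verify the two defining properties of quasi-$m$-accretivity in $H\1$ separately: first the accretivity of $A^*+\oo I$ for a suitable $\oo\ge0$, and then the range condition $R(I+\lbb(A^*+\oo I))=H\1$ for all $\lbb>0$.

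For accretivity, I would fix $y_1,y_2\in D(A^*)=H^1$, set $z=y_1-y_2\in H^1\subset L^2$ and $\vf=(I-\D)\1z\in H^1$, so that $|z|^2_{-1}=(\vf,(I-\D)\vf)_2=|\vf|^2_{H^1}$ and $-\D\vf=z-\vf$. Using \eqref{e2.1a} with the test function $\vf$,
$$\<A^*y_1-A^*y_2,z\>_{-1}=\int_\rrd(\na(\beta(y_1)-\beta(y_2))-D(b^*(y_1)-b^*(y_2)))\cdot\na\vf\,dx.$$
After substituting $-\D\vf=z-\vf$, the diffusion part splits as $\int_\rrd(\beta(y_1)-\beta(y_2))z\,dx-\int_\rrd(\beta(y_1)-\beta(y_2))\vf\,dx$, and by Hypothesis (i) the first integrand satisfies $(\beta(y_1)-\beta(y_2))z\ge\g_1z^2$, producing the coercive term $\g_1|z|^2_2$. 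Since $\beta$ and $b^*(r)=b(r)r$ are Lipschitz with constants $\g_2$ and $\g_3$ (the latter by Hypothesis (ii)), with $\beta(0)=b^*(0)=0$, and $D\in L^\9$ by Hypothesis (iii), Young's inequality bounds the two remaining, lower-order terms by $\frac{\g_1}{2}|z|^2_2+C|z|^2_{-1}$. Hence $\<A^*y_1-A^*y_2,z\>_{-1}\ge\frac{\g_1}{2}|z|^2_2-C|z|^2_{-1}\ge-\oo|z|^2_{-1}$ with $\oo:=C$, i.e.\ $A^*+\oo I$ is accretive in $H\1$.

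For the range condition the decisive idea is to realize $A^*$ as a variational operator in a Gelfand triple whose pivot space is $H\1$. I would take $V=L^2\subset H\1\subset V^*=H^{-2}$, with dense continuous embeddings, identifying $H\1$ with its own dual via $\<\cdot,\cdot\>_{-1}$. Then $A^*$ extends to a bounded map $V\to V^*$ with ${}_{V^*}(A^*y,v)_V=((I-\D)\1A^*y,v)_2$, which for $y\in H^1$ reduces to $\<A^*y,v\>_{-1}$; thus the variational monotonicity in this triple is exactly the $H\1$-accretivity established above. Using $(I-\D)\1(-\D\beta(y))=\beta(y)-(I-\D)\1\beta(y)$ together with $\int_\rrd\beta(y)y\,dx\ge\g_1|y|^2_2$ (from $\beta(0)=0$, $\beta'\ge\g_1$) and the same lower-order estimates, one obtains the coercivity bound
$${}_{V^*}((A^*+\oo I)y,y)_V\ge\tfrac{\g_1}{2}\,|y|^2_2=\tfrac{\g_1}{2}\,|y|^2_V$$
after enlarging $\oo$ if necessary; boundedness of $A^*:V\to V^*$ follows from the Lipschitz bounds, and hemicontinuity from the continuity of $\beta,b^*$ and dominated convergence. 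By the standard surjectivity theorem for monotone, coercive, bounded, hemicontinuous operators on a Gelfand triple (see, e.g., \cite{1}, \cite{2}), $A^*+\oo I:V\to V^*$ is maximal monotone, and its realization in $H\1$, whose domain is precisely $\{y\in L^2:A^*y\in H\1\}=H^1$, is maximal monotone in $H\1$ with $R(\mu I+A^*+\oo I)=H\1$ for every $\mu>0$. Choosing $\mu=1+\lbb\oo$ yields $R(I+\lbb(A^*+\oo I))=H\1$ for all $\lbb>0$, which together with the accretivity proves that $A^*$ is quasi-$m$-accretive.

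The step I expect to be the main obstacle is the range condition, and within it the coercivity estimate. The convection term $\divv(Db^*(y))$ is Lipschitz only as a map $L^2\to H\1$ and not $H\1\to H\1$, so it cannot be handled as a genuine Lipschitz perturbation directly in the pivot space $H\1$; it must instead be absorbed into the coercivity supplied by the diffusion term $-\D\beta(y)$. This is exactly why the Gelfand triple $L^2\subset H\1\subset H^{-2}$, rather than a perturbation argument in $H\1$ alone, is the natural framework, and why the monotonicity of $\beta$ (Hypothesis (i)) must be exploited both pointwise and through the correction term $(I-\D)\1\beta(y)$.
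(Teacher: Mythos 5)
Your proof is correct and follows essentially the same route as the paper: the accretivity estimate is the same computation (testing against $(I-\Delta)\1(y_1-y_2)$ and using $\beta'\ge\gamma_1$, the Lipschitz bounds on $\beta,b^*$ and $|\nabla(I-\Delta)\1 z|_2\le|z|_{-1}$), and your Gelfand-triple surjectivity argument for the range condition is just an abstract repackaging of the paper's reduction to the monotone, continuous, coercive operator $G_\lbb(y)=(I-\Delta)\1(y+\lbb A^*y)$ on $L^2$, followed by the same bootstrap $\beta(y)\in H^1\Rightarrow y\in H^1$. The only (immaterial) difference is that by enlarging $\oo$ you obtain the range condition for all $\lbb>0$, whereas the paper settles for $\lbb\in(0,\lbb_0)$, which is equally sufficient.
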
	

\begin{proof}
	We have
\begin{eqnarray}
&&\hspace*{-12mm}\<A^*y_1-A^*y_2,y_1-y_2\>_{-1}\label{e2.5}\\[1mm]
&&=(\beta(y_1){-}\beta(y_2),y_1-y_2)_2-
(\beta(y_1){-}\beta(y_2),(I{-}\Delta)\1(y_1{-}y_2))_2\nonumber\\[1mm]
&&\qquad\qquad\qquad\qquad\qquad+(D(b^*(y_1){-}b^*(y_2)),\nabla
(I{-}\Delta)\1(y_1{-}y_2))_2\nonumber\\[1mm]
&&\ge\gamma_1|y_1{-}y_2|^2_2{-}\gamma_2|y_1{-}y_2|_{-1}|y_1{-}y_2|_2-|D|_\9|b^*|_{\rm Lip}
|y_1{-}y_2|_2|y_1{-}y_2|_{-1}\nonumber\\[1mm]
&&\ge-\oo|y_1{-}y_2|^2_{-1},\ \ff y_1,y_2\in D(A^*),\nonumber
\end{eqnarray}for a suitable chosen $\oo\ge0$ and so $A^*+\oo I$ is accretive in $H\1$. (Here, we have used the inequality $|\nabla(I-\Delta)\1(y_1-y_2)|_2\le|y_1-y_2|_{-1}$.) Now, we shall prove that $R(I+\lbb A^*)=H\1$ for  $\lbb\in(0,\lbb_0)$, where $\lbb_0$ is suitably chosen. For this purpose, we fix $f\in H\1$ and consider the equation  
 \begin{equation}\label{e2.6}
y-\lbb\Delta\beta(y)+\lbb\divv(Db^*(y))=f\mbox{ in }\cald'(\rrd),\ y\in L^2.\end{equation}The latter can be written as
\begin{equation}\label{e2.7a}
G_\lbb(y)=(I-\Delta)\1f,\end{equation}where $G_\lbb:L^2\to L^2$ is the  operator
$$\barr{r}
G_\lbb(y)=\lbb\beta(y)+(I-\Delta)\1y-\lbb(I-\Delta)\1\divv(Db^*(y))-\lbb(I-\Delta)\1\beta(y),\\ \ff y\in L^2\earr,$$
which by Hypotheses (i)--(iii) is continuous. 
Then, by (i)--(iii) we have

$$\barr{l}
(G_\lbb(y_1)-G_\lbb(y_2),y_1-y_2)_2\vsp
\hspace*{16mm}\ge\lbb\gamma_1|y_1{-}y_2|^2_2
{+}|y_1{-}y_2|^2_{-1}{-}\lbb \gamma_2|y_1{-}y_2|_{-1}|y_1{-}y_2|_2\vsp
\hfill{-}\lbb|D|_\9|b^*|_{\rm Lip}|y_1{-}y_2|_2|y_1{-}y_2
|_{-1}\vsp
\hspace*{16mm}\ge\dd\frac12(\lbb\gamma_1{-}\gamma^2_2\gamma\lbb^2-\lbb^2|D|^2_\9|b^*|^2_{\rm Lip})|y_1{-}y_2|^2_2
{+}\dd\frac12|y_1{-}y_2|^2_{-1}\vsp 
\hspace*{16mm}\ge\alpha|y_1{-}y_2|^2_2,\ \ff y_1,y_2\in L^2,\earr$$
for some $\alpha>0$ and $0<\lbb<\lbb_0$ with $\lbb_0$ sufficiently small. Hence, the operator $G_\lbb$ is monotone and coercive in the space $L^2$. Since 
it is also continuous, we infer that it is surjective (see, e.g., \cite{1}, p.~37) and, therefore, $R(G_\lbb)=L^2$ for $0<\lbb<\lbb_0)$. Hence, \eqref{e2.7a} (equivalently \eqref{e2.6}) has a solution $y\in L^2$ for $\lbb\in(0,\lbb_0)$ and $\beta(y)\in H^1$. Then, by (i) it follows that $y\in H^1$ and so $y\in D(A^*)$. Hence, $A^*$ is quasi-$m$-accretive in $H\1$.\end{proof}

 Lemma \ref{l2} implies that there is a  $C_0$-continuous nonlinear semigroup $S^*(t):H\1\to H\1$, $t\ge0$, which is generated by $-A^*$. This means (see, e.g., \cite{1}, p.~146 or  \cite{2}, p.~56) that
\begin{equation}\label{e2.7}
S^*(t)u_0=\lim_{n\to\9}\(I+\frac tn\,A^*\)^{-n}u_0\ \mbox{ in }H\1,\ \ff t\ge0,\ \ff u_0\in H\1,\end{equation}uniformly on compact intervals. Moreover, for all $u_0\in D(A^*)=H^1$ we have $S^*(t)u_0\in D(A^*),$ $[0,\9)\ni t\mapsto S^*(t)u_0\in H\1$ is locally Lipschitz and,
\begin{eqnarray}
\frac{d^+}{dt}\,S^*(t)u_0+A^*S^*(t)u_0&=&0,\ \ \ff t\ge0,\label{e2.8}\\[1mm]
\frac{d}{dt}\,S^*(t)u_0+A^*S^*(t)u_0&=&0,\ \mbox{ a.e. } t>0,\label{e2.10a}
 \end{eqnarray}
 and the function $t\to\frac{d^+}{dt}\,S^*(t)u_0$ is continuous from the right in the  $H\1$-topology.  
 Taking into account \eqref{e2.1a}, we can rewrite \eqref{e2.10a} as
 \begin{equation}\label{e2.12a}
 \barr{l}
 \dd\frac d{dt}\int_\rrd(S^*(t)u_0)(x)\vf(x)dx
 +\dd\int_\rrd(\nabla\beta(S^*(t)u_0)(x))\vsp\qquad-D(x)b^*((S^*(t)u_0)(x))\cdot\nabla\vf(x)dx=0,\ \mbox{a.e. }t>0,\ \ff\vf\in H^1.\earr
 \end{equation} 
We also note that the semigroup $S^*(t)$ is quasi-contractive on $H\1$, that is,
$$|S^*(t)u_0-S^*(t)\bar u_0|_{-1}\le\exp(\oo t)|u_0-\bar u_0|_{-1},\ \ff t\ge0,\ \ff u_0,\bar u_0\in H\1,$$for some $\oo\ge0$. 
 Moreover, we have for all $u_0\in L^2$ and $T>0$,
\begin{equation}\label{e2.9}
|S^*(t)u_0|^2_2+\int^t_0|\nabla(S^*(s)u_0)|^2_2ds\le C_T|u_0|^2_2,\ \ff t\in[0,T].\end{equation}Here is the argument. By \eqref{e2.7} we have, for all $T>0$,
\begin{equation}\label{e2.10}
S^*(t)u_0=\lim_{h\to0} v_h(t)\ \mbox{ in }H\1,\ \ff t\in(0,T),\end{equation}where

\begin{equation}\label{e2.11}
\barr{rcl}
v_h(t)&\!\!=\!\!&v^j_h,\ \ff t\in[jh,(j+1)h),\ j=0,1,...,N_h=\mbox{$\left[\frac Th\right]$},\vsp 
v^{j+1}_h+hA^*v^{j+1}_h&\!\!=\!\!&v^j_h,\ j=0,1,...,N_h;\ \ v^0_h=u_0.\earr
\end{equation}\newpage \n Since $v^0_h=u_0\in L^2,$ we get by \eqref{e2.11}
$$\barr{c}
(\beta(v^{j+1}_h),v^{j+1}_h-v^j_h)_2+h|\nabla\beta(v^{j+1}_h)|^2_2
= h(\nabla\beta(v^{j+1}_h),Db^*(v^{j+1}_h))_2\vsp 
\le\dd\frac h2\,|\nabla\beta(v ^{j+1}_h)|^2_2+\frac h2\,(|D|^\9|b^*|_\9|v^{j+1}_h|_2)^2.\earr$$
By (i), this yields
$$\int_\rrd j(v^{j+1}_h)dx+\frac12\,\gamma^2_1h\sum^{j+1}_{k=1}|\nabla(v^{k}_h)|^2_2\le\int_\rrd j(u_0)dx+Ch\dd\sum^{j+1}_{k=1}|v^k_h|^2_2,$$
where $j(r)=\int^r_0\beta(s)ds.$ Since $\frac12\,\gamma_1 r^2\le j(r)\le\frac12\,\gamma_2r^2,$ $\ff r\in\rr$, we have 
$$|v_h(t)|^2_2+\int^t_0|\nabla v_h(s)|^2_2ds\le C\(\int^t_0|v_h(s)|^2_2ds+|u_0|^2_2\),\ t\in(0,T).$$
Hence
$$|v_h(t)|^2_2+\int^t_0|\nabla v_h(s)|^2_2ds\le C|u_0|^2_2,\ \ff t\in(0,T),\ h>0.$$
Therefore, by \eqref{e2.10} and by the weak-lower semicontinuity of the $L^2(0,T;H^1)$-norm,  \eqref{e2.9} follows. Hence,   $S^*(t)u_0\in H^1$, a.e. $t>0$, and so, by the semigroup property, $S^*(t+s)=S^*(t)S^*(s),$ $t,s\ge0,$ we infer that  $S^*(t)$ has a smoothing effect on initial data, that is, 
\begin{equation}\label{e2.12}
S^*(t)u_0\in H^1=D(A^*),\ \ff t>0, u_0\in L^2.\end{equation}
Then, by \eqref{e2.8} it follows that $t\mapsto S^*(t)u_0$ is $H\1$-continuous on $(0,T)$ for all $u_0\in L^2$, hence $t\mapsto|S^*(t)u_0|_2$ is lower semicontinuous on $(0,T)$. Furthermore, \eqref{e2.12} implies
\begin{equation}\label{e2.13}
\frac{d^+}{dt}\,S^*(t)u_0+A^*S^*(t)u_0=0,\ \ff u_0\in L^2,\ \ff t>0,\end{equation}
and that the function $t\to \frac{d^+}{dt}\,S^*(t)u_0=-A^*S^*(t)u_0$ is $H\1$-right continuous on~$(0,\9)$. Since $S^*(\cdot)u_0\in L^\9(0,T;L^2)\cap C([0,T];H\1)$, it follows that $$\sup_{t\in[0,T]}|S^*(t)u_0|_2\le{\rm ess}\sup_{\hspace*{-4mm}t>0}|S^*(t)u_0|_2\vee|S^*(T)u_0|_2+|u_0|_2<\9$$ and hence we obtain
by the uniqueness of limits that the function $t\to S^*(t)u_0$ is $L^2$-weakly continuous, that is, $S^*(\cdot)u_0\in C_w([0,T];L^2)$,\ $\ff T>0$.  
We set $u_h(t)=u(t+h)-u(t),$ $u(t)\equiv S^*(t)u_0$, $\ff t\in[0,T],$ $h>0$, $u_0\in L^2$. By   \eqref{e2.13} we have\newpage 
$$\frac{d^+}{dt}\,u_h(t)+A^*u(t+h)-A^*u(t)=0,\ \ff t\in(0,T].$$
This yields
$$\frac12\,\frac{d^+}{dt}\,|u_h(t)|^2_{-1}+\<A^*u(t+h)-A^*u(t),u_h(t)\>_{-1}=0$$and, therefore, by  \eqref{e2.5} $$\frac12\,\frac{d^+}{dt}\,|u_h(t)|^2_{-1}\le\oo|u_h(t)|^2_{-1},\ \ff t\in(0,T].$$
Hence,  for all $h>0$, we have
$$|u_h(t)|_{-1}\exp(-\oo t)\le|u_h(s)|_{-1}\exp(-\oo s), \ 0<s<t<T,$$and, therefore, the function  $t\to\exp(-\oo t)
|A^*S^*(t)u_0|_{-1}$ is monotonically de\-crea\-sing on $(0,\9)$ and so it is everywhere continuous on  $(0,\9)$, except for a countable set $N\subset(0,\9)$.  

Since the continuity points of $t\to \exp(-\oo t)A^*S^*(t)u_0$   coin\-cide with that of  $t\to\exp(-\oo t)|A^*S^*(t)u_0|_{-1}$ (see the proof of Lemma 3.1 in \cite{6}), we infer that the function $t\to\exp(-\oo t) A^*S^*(t)u_0$ has at most countably many discontinuities. Hence, for each $u_0\in L^2$, the function $t\to S^*(t)u_0$ is $H\1$ differentiable on $(0,\9)\setminus N$~and
\begin{equation}\label{e2.14}
\frac d{dt}\,S^*(t)u_0+A^*S^*(t)u_0=0,\ \ff t\in(0,\9)\setminus N,\end{equation}where $N$ is a countable subset of $(0,\9)$.\bk

\n{\it Proof of Theorem} \ref{t1}  (continued).  We note first that
\begin{equation}\label{e2.15}
S(t)u_0=S^*(t)u_0,\ \ff t\ge0,\ u_0\in L^1\cap L^2.\end{equation}
Indeed, by \eqref{e2.11}  it follows that if $u_0\in L^1\cap L^2$, then $|v^{j+1}_h|_1\le|v^j_k|_1,$\break $ \ff j=0,1,...$, and, therefore,
\begin{equation}\label{e2.21b}
|v^{j+1}_h|_1\le|v^0_j|_1=|v_0|_1,\ \ff j.\end{equation}
This follows by multiplying equation \eqref{e2.11}  with $\calx_\delta(v^{j+1}_h)$ and integrating over  $\rrd$, where $\calx_\delta$ is defined by\newpage
$$\calx_\delta(r)=\left\{\barr{rll}
1&\mbox{ for }&r\ge\delta,\vsp 
\dd\frac r\delta&\mbox{ for }&r\in(-\delta,\delta),\vsp 
-1&\mbox{ for }&r\le-\delta.\earr\right.$$
Taking into account that $v^{j+1}_h\in H^1$, we have by \eqref{e2.1} that
$$\barr{ll}
(A^*v^{j+1}_h,\calx_\delta(v^{j+1}_h))_2\!\!\!\vsp\dd=\int_\rrd\beta'(v^{j+1}_h)|\nabla v^{j+1}_h|^2\calx'_\delta(v^{j+1}_h)dx 
+\dd\int_{[x;|v^{j+1}_h(x)|\le\delta]}(v^{j+1}_h)(D\cdot\nabla v^{j+1}_h)dx,\earr$$
which yields
$$\limsup_{\delta\to0}(A^*v^{j+1}_h,\calx_\delta(v^{j+1}_h))_2\ge0,\ \ff j=0,1,...\,.$$
Hence, 
$$\limsup_{\delta\to\9}\int_\rrd v^{j+1}_h\calx_\delta(v^{j+1}_h)dx\le|v^j_h|_1,\ \ff j=0,1,...,$$and so \eqref{e2.21b} follows. 

Comparing \eqref{e2.11}  with \eqref{e1.4}, we infer that $u_h\equiv v_h$, $\ff h$, and so, by \eqref{e1.3} and \eqref{e2.10}, we get \eqref{e2.15}, as claimed. In particular, we have that, if $u_0\in\calp\cap L^\9$, then by \eqref{e1.7b} it  also follows that $S^*(t)u_0\in\calp\cap L^\9$, $\ff t>0$.

Roughly speaking, this means that the semigroup $S(t)$ is smooth on $L^1\cap L^2$ in $H\1$-norm. Then, by  \eqref{e2.2}--\eqref{e2.3}, \eqref{e2.15} and the corresponding properties of $S(t)$ follow by \eqref{e2.8}, \eqref{e2.13}--\eqref{e2.14}. As regards \eqref{e2.4}--\eqref{e2.4a}, we note first that by Theorem 4.1 in \cite{4} (see also \cite{5b}, p.~161), we have, for all $u_0\in\calp$ with $u_0\log u_0\in L^1$,
\begin{equation}\label{e2.17}
E(S(t)u_0)+\int^t_0\Psi(S(\tau)u_0)d\tau\le E(u_0)<\9,\ \ff t\ge0,\end{equation}where $E$ is the energy functional  \eqref{e1.10} and  
\begin{equation}\label{e2.23a}
\Psi(u)\equiv\int_\rrd
\left|\frac{\beta'(u)\nabla u} {\sqrt{b^*(u)}}-D\sqrt{b^*(u)}\right|^2dx.\end{equation}
Hence, $\Psi(S^*(t)u_0)<\9,$ a.e. $t>0$, which by \eqref{e2.15} and Hypotheses (i)--(iii) implies \eqref{e2.4} (see \cite[Lemma 5.1]{4}), as claimed. Moreover, by \eqref{e2.17},    also \eqref{e2.4a} holds.
 
 Assume now that {$u_0\in \calp^*$, hence $\frac\psi{u_0}\in L^1$ for some $\psi\in\calx$, where $\calx$ is defined by \eqref{e1.13a}. We note that since $S(t)(\calp)\subset\calp$, $\ff t\ge0$, we have also that $u(t)\ge0$, $\ff t\ge0$, and $u(t)\in L^\9$, $\ff t\ge0$. So, it remains to prove that $\frac\psi{u(t)}\in L^1$,  $\ff t\ge0$.} To this end, we consider the cut-off function $$\vf_n(x)=\eta\(\frac{|x|^2}n\)\psi(x),\ \ff x\in\rrd,\ n\in\nn,$$
 where $\eta\in C^2([0,\9))$ is such that $0\le\eta\le1$ and
 \begin{equation}
 \label{e2.19a}
 \eta(r)=1,\ \ff r\in[0,1];\ \ \eta(r)=0,\ \ff r\ge2.\end{equation}  
Since $u:[0,\9)\to H\1$ is locally Lipschitz,    $[0,\9)\ni t\to{}_{H\1}(u(t),\vf)_{H^1}$ is locally Lipschitz for all $\vf\in H^1$, and so almost everywhere differentiable.  We also note the chain differentiation   rule 
$$\frac d{dt}\int_\rrd g(u(t,x))\vf_n(x)dx={}_{H\1}\!\(\frac{du}{dt}\,(t),\gamma(u(t))\vf_n\)_{H^1},\mbox{ a.e. }t\in(0,T),$$
for all $T>0$ and all $u\in L^2(0,T;H^1),$ with $\frac{du}{dt}\in L^2(0,T;H\1)$, where $\gamma\in C^1(\rr)$, $g(r)\equiv\int^r_0\gamma(s)ds$. 

 In the special case, where $\frac{du}{dt}\in L^2(0,T;L^2)$, this formula follows by \cite[Lemma 4.4, p.~158]{1}. If $\frac{du}{dt}\in L^2(0,T;H\1)$, this follows by approximating $u$ by $u_\vp=(I-\vp\Delta)\1u$ and letting $\vp\to0$. We also note that by \eqref{e2.9} we have that $u=S^*(t)u_0\in L^2(0,T;H^1)$. 

 Let $\vp>0$ be arbitrary, but fixed. Then, since $(u(\cdot)+\vp)\1\in L^2(0,T;H^1)$, we~have
 $$-\frac d{dt}\int_\rrd\frac{\vf_n(x)}{u(t,x)+\vp}\,dx={}_{H\1}\(\frac{du}{dt}\,(t),\frac{\vf_n}{(u(t)+\vp)^2}\)_{H^1},\ \mbox{a.e. }t>0,$$and so, by \eqref{e2.12a} we get
 
 \begin{equation}
 \label{e2.19aa}
 \quad\barr{l}
 \dd\frac d{dt}\int_\rrd\frac{\vf_n(x)}{u(t,x)+\vp}\,dx
 +2\int_\rrd\frac{\beta'(u(t,x))\vf_n(x)|\nabla u(t,x)|^2}{(u(t,x)+\vp)^3}\,dx\vspace*{3mm}\\ 
 \quad\quad=\dd\int_\rrd\frac{\beta'(u(t,x))(\nabla\vf_n(x)\cdot\nabla u(t,x))}{(u(t,x)+\vp)^2}\,dx\vspace*{3mm}\\ 
 \quad\qquad-\dd\int_\rrd\frac{(D(x)\cdot\nabla\vf_n(x))b(u(t,x))u(t,x)}{(u(t,x)+\vp)^2}\,dx\vspace*{3mm}\\ 
 \quad\qquad+2\dd\int_\rrd\frac{b(u(t,x))u(t,x))(D(x)\cdot\nabla u(t,x))\vf_n(x)}{(u(t,x)+\vp)^3}\,dx,
 \mbox{ a.e. }t>0.\earr\hspace*{-10mm}
 \end{equation}
 By \eqref{e2.19a}, we have
 $$|\nabla\vf_n(x)|\le\frac{4\psi(x)}{\sqrt{n}}\,|\eta'|_\9+\vf_n(x)
g(x),\ x\in\rrd,$$
 {where $g(x)=\frac{|\nabla\psi(x)|}{\psi(x)}$.}\mk  
 
 On the other hand, we have by Hypotheses (i)---(iii) that
  \begin{equation}\label{2.26} 
 \barr{l}
 2\dd\int_\rrd\frac{\beta'(u(t,x))\vf_n(x)|\nabla u(t,x)|^2}{(u(t,x)+\vp)^3}\,dx
 \ge2\gamma_1\dd\int_\rrd\frac{\vf_n(x)|\nabla u(t,x)|^2}{(u(t,x)+\vp)^3},\earr
 \end{equation}
 and
 
 \begin{equation}\label{2.27} 
 \barr{l}
 \dd\int_\rrd
 \frac{\beta'(u(t,x))\nabla\vf_n(x)\cdot\nabla u(t,x)}{(u(t,x)+\vp)^2}\,dx\vspace*{3mm}\\
 \qquad
 \le\gamma_2\dd\int_\rrd\frac{|\nabla u(t,x)|}{(u(t,x)+\vp)^2}\,
 \(\dd\frac{4\psi(x)}{\sqrt{n}}\,|\eta'|_\9+\vf_n(x)
 g(x)\)dx\vspace*{3mm}\\
 \qquad 
 \le C_1\gamma_2\dd\int_\rrd\frac{|\nabla u(t,x)|\vf_n(x)}{(u(t,x)+\vp)^2}\,dx
 +\dd\frac{C_2\gamma_2}{\sqrt{n}}\dd\int_\rrd
 \frac{|\nabla u(t,x)|\psi(x)}{(u(t,x)+\vp)^2}\,dx\vspace*{3mm}\\
 \qquad\le\dd\frac{\gamma_1}2\dd\int_\rrd\frac{\vf_n(x)|\nabla u(t,x)|^2}{(u(t,x)+\vp)^3}\,dx
 +\dd\frac{C_2\gamma_2}{\sqrt{n}}
 \int_\rrd\frac{|\nabla u(t,x)|\psi(x)}{(u(t,x)+\vp)^2}\,dx\vspace*{3mm}\\
 \qquad+\,C_3\dd\int_\rrd\frac{\vf_n(x)}{u(t,x)+\vp}\,dx.
  \earr\end{equation}
 
 \begin{equation}\label{2.28} 
 \barr{l}
 \dd\int_\rrd
 \frac{D(x){\cdot}\nabla\vf_n(x)b(u(t,x))u(t,x)}{(u(t,x)+\vp)^2}\,dx
 \le C_4\dd\int_\rrd\frac{|\nabla\vf_n(x)|}{u(t,x)+\vp}\,dx\vspace*{3mm}\\
 \qquad
 \le C_5\dd\int_\rrd\(\frac{\vf_n(x)}{u(t,x)+\vp}+\frac1{\sqrt{n}(u(t,x)+\vp)}\)dx.\earr\end{equation}
 
 \begin{equation}\label{2.29} 
 \barr{l}
 2\dd\int_\rrd
 \frac{b(u(t,x))u(t,x)(D(x){\cdot}\nabla u(t,x))\vf_n(x)}{(u(t,x)+\vp)^3}\,dx\vspace*{3mm}\\
 \qquad
 \le C_6\gamma_3\dd\int_\rrd\frac{|\nabla u(t,x)|\vf_n(x)}{(u(t,x)+\vp)^2}\,dx\vspace*{3mm}\\
 \qquad
 \le\dd\frac{\gamma_1}2\int_\rrd\frac{|\nabla u(t,x)|^2\vf_n(x)}{(u(t,x)+\vp)^3}\,dx
 +C_7\dd\int_\rrd\frac{\vf_n(x)}{u(t,x)+\vp}\,dx.
 \earr\end{equation}
  Then,  by \eqref{2.26}--\eqref{2.29} and by Hypotheses (i)--(iii) it follows that 
 $$\barr{ll}
 \dd\frac d{dt}\int_\rrd\frac{\vf_n(x)}{u(t,x)+\vp}\,dx\!\!\!
 &+\gamma_1\dd\int_\rrd
 \frac{\vf_n(x)|\nabla u(t,x)|^2}{(u(t,x)+\vp)^3}\,dx\vspace*{3mm}\\
 &\le C_8\dd\int_\rrd\frac{\vf_n(x)}{u(t,x)+\vp}\,dx+\dd\frac{C_8}{\sqrt{n}}\int_\rrd\frac{1}{u(t,x)+\vp}\,dx\vspace*{3mm}\\
 &\ \ \ +\,\frac{C_8}{\sqrt{n}}
 \dd\int_\rrd\frac{|\nabla u(t,x)|\psi(x)}{(u(t,x)+\vp)^2}\,dx,\mbox{ a.e. }t>0.\earr$$
 This yields
 $$\barr{ll}
 \dd\int_\rrd\frac{\vf_n(x)}{u(t,x)+\vp}\,dx\!\!\!
 &+\gamma_1\dd\int^t_0\int_\rrd
 \frac{\vf_n(x)|\nabla u(s,x)|^2}{(u(s,x)+\vp)^3}\,dxds\vspace*{3mm}\\
 &\le\dd\int_\rrd\frac{\vf_n(x)}{u_0(x)+\vp}\,dx
 +C_9\dd\int^t_0ds\int_\rrd\frac{\vf_n(x)}{u(s,x)+\vp}\,dx\vspace*{3mm}\\
 &\ \ +\,
 \dd\frac{C_9}{\sqrt{n}} \int^t_0ds\int_\rrd\frac{1}{u(s,x)+\vp}\,dx\vspace*{3mm}\\
 &+\,\dd\frac{C_9}{\sqrt{n}}\int^t_0 ds\int_\rrd
 \frac{|\nabla u(s,x)|\psi(x)}{(u(s,x)+\vp)^2}\,dxds,\  \ff t\ge0,\earr$$
 while 
 $$\barr{ll}
 \dd\int^T_0\!\!\int_\rrd\!\frac{|\nabla u(s,x)|\psi(x)}{(u(s,x)+\vp)^2}\,dxds
\!\!\!& \le\!\dd\frac1{\vp^{2}}
 \(\dd\int^T_0\!\!ds\int_\rrd|\nabla u(s,x)|^2
 dx\!\)^{\!\!\!\frac12}\!\!
 \(T\!\!\int_\rrd\!\psi^2(x)dx\)^{\frac12}\vspace*{3mm}\\
 &\le\dd\frac{C_{10}}{\vp^{2}},
 \earr$$ 
 because by \eqref{e2.9} we know that $\nabla u\in L^2(0,T;L^2)$.
 
 Letting $n\to\9$, we get
 $$\int_\rrd\frac{\psi(x)dx}{u(t,x)+\vp}\le\int_\rrd\frac{\psi(x)dx}{u_0(x)+\vp}+C_T\int^t_0ds\int_\rrd\frac{\psi(x)dx}{u(s,x)+\vp},\ \ff t\in(0,T),$$ where $C_T>0$ is independent of $\vp$, and so, for $\vp\to0$ it follows by Gronwall's lemma (which is applicable since $\psi\in L^1$) and by Fatou's lemma,
 $$\int_\rrd\frac{\psi(x)dx}{u(t,x)}\le\exp(C_Tt)\int_\rrd\frac{\psi(x)dx}{u_0(x)}<\9,\ \ff t\in(0,T),$$as claimed.\hfill$\Box$

\section{A new tangent space to $\calp$}
\label{s3}
\setcounter{equation}{0}

To represent NFPE \eqref{e1.1} as a gradient flow as in \cite{9}, \cite{10}, we shall  interpret  the space $\calp^*$ as a Riemannian manifold endowed with an appropriate tangent bundle with scalar product which is, however, different from the one in \cite{9}, \cite{10}. To this purpose, we define the tangent space $\calt_u(\calp^*)$  at $u\in\calp^*\subset\calp$ as follows,   
\begin{equation}
\label{e3.1}
\calt_u(\calp^*)=\{z=-\divv(b^*(u)\nabla y); y\in W^{1,1}_{\rm loc}(\rrd),  \sqrt{u}\,\nabla y\in L^2\}\,(\subset H\1). \end{equation}
(Here, $\calp^*$ is defined by \eqref{e1.10b}.) 

The differential structure of the manifold $\calp^*$ is defined by providing for $u\in\calp^*$ the linear space $\calt_u(\calp^*)$ with the scalar product (metric tensor)
\begin{equation}
	\barr{rcl}
	\<z_1,z_2\>_u&=&\dd\int_\rrd b^*(u)\nabla y_1\cdot
\nabla y_2\,dx,\vsp 
 z_i&=&{\rm div}(b^*(u)\nabla y_i),\ i=1,2,\earr\label{e3.2}\end{equation}
and with the corresponding Hilbertian norm  $\|\cdot\|_u$,
\begin{equation}
\|z\|^2_u=\dd\int_\rrd b^*(u)|\nabla y|^2dx,\ z=-{\rm div}(b^*(u)\nabla y).\label{e3.2a}
\end{equation}
As a matter of fact, $\calt_u(\calp^*)$ is viewed here as a factor space by identifying in \eqref{e3.2} two functions $y_1,y_2\in W^{1,1}_{\rm loc}$ if ${\rm div}(b^*(u)\nabla(y_1-y_2))\equiv0$. Note also that, since $b^*(u)\ge b_0u$ and $u>0$, a.e. on $\rrd$,  $\|z\|_u=0$ implies that $z\equiv0$. Moreover, we have
\begin{equation}
\label{e3.3}
\|z_1\|_u=\|z_2\|_u\mbox{ for }z_i={\rm div}(b^*(u)\nabla y_i),\ i=1,2,
\end{equation}
if ${\rm div}(b^*(u)\nabla(y_1-y_2))\equiv0$ in $H\1$. Indeed, for each $\vf\in C^\9_0(\rrd)$, we have in this~case that
$$\int_\rrd b^*(u)\nabla(y_1-y_2)\cdot\nabla(\vf y_i)dx=0,\ i=1,2,$$and this yields
\begin{equation}\label{e3.4}
\int_\rrd b^*(u)\nabla(y_1-y_2)\cdot(\vf\nabla y_i+y_i\nabla\vf)dx=0,\ i=1,2.
\end{equation}
If we take $\vf(x)=\eta\(\frac{|x|^2}n\),$ where $\eta\in C^2([0,\9));\ \eta(r)=1$ for $0\le r\le1,$ $\eta(r)=0$ for $r\ge2$, and let $n\to\9$ in \eqref{e3.4}, we get via the Lebesgue dominated convergence theorem that\newpage
$$\int_\rrd b^*(u)\nabla(y_1-y_2)\cdot\nabla y_i\,dx=0,\ i=1,2,$$
which, as easily seen, implies \eqref{e3.3}, as claimed. Hence, the norm $\|z\|_u$ is independent of representation \eqref{e3.2} for $z$. 
We should also note that $\calt_u(\calp^*)$ so defined is a Hilbert space, in particular, it is complete in the norm $\|\cdot\|_u$. Here is the argument.

Let $u\in\calp^*$ and let $\{y_n\}\subset W^{1,1}_{\rm loc}$ be such that
$$\|z_n-z_m\|^2_u=\int_\rrd b^*(u)|\nabla(y_n-y_m)|^2dx\to0\mbox{ as }n,m\to\9.$$
This implies that the sequence
 $\{\sqrt{b^*(u)}\,\nabla y_n\}$ is convergent in $L^2$ as $n\to\9$ and by Hypothesis (ii) so is $\{\sqrt{u}\,\nabla y_n\}$. Let
\begin{equation}\label{e3.5}
f=\lim_{n\to\9}\sqrt{b^*(u)}\,\nabla y_n\mbox{ in }L^2.
\end{equation}
As $\frac\psi u\in L^1$ for some $\psi\in\calx$,   we infer   that $\{\nabla y_n\}$ is convergent in $L^1_{\rm loc}$ and so, by the Sobolev embedding theorem (see, e.g., \cite{5a}, p.~278), the sequence $\{y_n\}$ is convergent in $L^{\frac d{d-1}}_{\rm loc}$ and, therefore, in $L^1_{\rm loc}$ too. Hence, as $n\to\9$, we have
$$\barr{rcll}
y_n&\longrightarrow&y&\mbox{ in }L^1_{\rm loc}\cap L^{\frac d{d-1}}_{\rm loc},\vsp 
\nabla y_n&\longrightarrow&\nabla y&\mbox{ in }(L^1_{\rm loc})^d.\earr$$
and hence, along a subsequence, a.e. on $\rrd$. 
So, by \eqref{e3.5} we infer that $f=\sqrt{b^*(u)}\,\nabla y$, where $y\in  W^{1,1}_{\rm loc}$. Hence, as $n\to\9$, we have
$$\|z_n-z\|_u\to0\mbox{ for } z=-{\rm div}(b^*(u)\nabla y),\ y\in  W^{1,1}_{\rm loc},$$as claimed.  

As a consequence, we have that
\begin{equation}\label{e3.7}
\{z=-\divv(b^*(u)\nabla y);y\in C^\9_0(\rrd)\}\mbox{ is dense in }\calt_u(\calp^*)\mbox{ for all }u\in\calp^*.
\end{equation}

To conclude, we have shown that, {\it for each $u\in\calp^*$, $\calt_u(\calp^*)$ is a Hilbert space with the scalar product \eqref{e3.2}} and, as mentioned earlier, this is just the {\it tangent space} to $\calp^*$ at $u$. 

\section{The \FP\ gradient flow on $\calp^*$}
\label{s4}
\setcounter{equation}{0}
We are going to define here the gradient of the energy function $E:L^2\to]-\9,+\9]$ defined by \eqref{e1.10}. Namely, 
$$E(u)=\left\{
\barr{ll}
\dd\int_\rrd(\eta(u)+\Phi u)dx&\mbox{ if }u\in\calp\cap L^\9(\rrd)\cap L^1(\rrd;\Phi dx)\vsp
+\9&\mbox{ otherwise}.\earr
\right.$$
We note that $E$ is convex, nonidentically $+\9$ and we have:

\begin{lemma}\label{l4.1} $E$ is lower-semicontinuous on $L^2$.\end{lemma}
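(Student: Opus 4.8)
The plan is to prove sequential lower-semicontinuity directly. Given $u_n\to u$ in $L^2$ with $\liminf_n E(u_n)=:L<\9$ (otherwise there is nothing to prove), I would first pass to a subsequence along which $E(u_n)\to L$ and $\sup_n E(u_n)<\9$, and then to a further subsequence along which $u_n\to u$ a.e. on $\rrd$. Along this subsequence each $u_n\in\calp$, so $u_n\ge0$ and $\int_\rrd u_n\,dx=1$, and hence $u\ge0$ a.e. Writing the integrand as $F(x,r)=\eta(r)+\Phi(x)r$ for $r\ge0$ (and $F(x,r)=+\9$ for $r<0$), the statement reduces to two assertions: that the confining term forces $u\in\calp$, and that $\int_\rrd F(x,u)\,dx\le\liminf_n\int_\rrd F(x,u_n)\,dx$.

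The analytic input is the behaviour of $\eta$. From Hypotheses (i)--(ii) one has $\eta''(r)=\frac{\beta'(r)}{r\,b(r)}>0$ for $r>0$, so $\eta$ is convex and continuous on $[0,\9)$ with $\eta(0)=0$ and $\eta'(1)=0$; since $\frac{\gamma_1}{r\,|b|_\9}\le\eta''(r)\le\frac{\gamma_2}{r\,b_0}$, it has $r\log r$ growth at infinity and $\eta'(r)\to-\9$ as $r\downarrow0$. The key point, on which everything hinges, is the following pointwise lower bound obtained by minimising in $r$: for each $\theta\in(0,1]$ the function $m_\theta(x):=\inf_{r\ge0}\(\eta(r)+\theta\Phi(x)r\)$ lies in $L^1(\rrd)$. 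Indeed, the minimiser $r^*(x)$ solves $\eta'(r^*)=-\theta\Phi(x)$; since $\eta'(s)\sim c\log s$ near $0$, a direct computation gives $r^*(x)\le e^{-c\Phi(x)}$ and hence $|m_\theta(x)|\le C\,\Phi(x)e^{-c\Phi(x)}$. By Hypothesis (iv), $\Phi\ge1$, $\Phi\to\9$, and $\Phi^{-m}\in L^1$; the last condition forces every sublevel set $\{\Phi\le R\}$ to have finite measure and yields $\Phi e^{-c\Phi}\le C_m\Phi^{-m}$ in the tails, so $m_\theta\in L^1(\rrd)$.

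With this in hand the two assertions follow quickly. First, from the splitting $F(x,r)\ge\tfrac12\Phi(x)r+m_{1/2}(x)$ I would obtain $\int_\rrd\Phi u_n\,dx\le 2\(E(u_n)-\int_\rrd m_{1/2}\,dx\)\le C<\9$; since $\Phi\to\9$, this makes $\{u_n\}$ tight, and together with a.e. convergence and $L^2$-convergence on balls it gives $\int_\rrd u\,dx=\lim_n\int_\rrd u_n\,dx=1$ and $\int_\rrd\Phi u\,dx<\9$, i.e. $u\in\calp\cap L^1(\rrd;\Phi dx)$. Second, the functions $F(x,u_n(x))-m_1(x)$ are nonnegative, $r\mapsto F(x,r)$ is lower-semicontinuous (indeed continuous on $[0,\9)$) and $u_n\to u$ a.e.; Fatou's lemma then yields $\int_\rrd(F(x,u)-m_1)\,dx\le\liminf_n\int_\rrd(F(x,u_n)-m_1)\,dx$, and adding back $\int_\rrd m_1\,dx$ produces $E(u)\le L$, which is the claim.

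The main obstacle, and the only non-routine point, is precisely the construction of the integrable lower bound $m_\theta$: because $\eta$ changes sign and is bounded below only by a negative constant, a naive application of Fatou to $\int_\rrd\eta(u_n)\,dx$ over the infinite-measure space $\rrd$ fails, and the negative part of $\eta(u_n)$ in the tails must be absorbed into the confining term $\Phi u_n$. This is exactly where Hypothesis (iv) enters, through the exponential smallness of the minimiser $r^*(x)$ set against the polynomial integrability $\Phi^{-m}\in L^1$. I would note in passing that an equivalent, more structural route is to represent $E$ as a supremum of affine $L^2$-continuous functionals via the Legendre transform of $F(x,\cdot)$, from which lower-semicontinuity is immediate; the estimates above are precisely what make the test functions in that representation legitimate.
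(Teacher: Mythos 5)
Your proof is correct, but its technical core is genuinely different from the paper's. The paper handles the negative part of $\eta(u_n)$ by a \emph{generalized} Fatou lemma with $n$-dependent minorants: it bounds $\eta(r)\ge -c\,r\log^-(r)-cr\ge-\frac{c}{(1-\alpha)e}\,r^\alpha-cr$, sets $f_n=\frac{c}{(1-\alpha)e}\,u_n^\alpha+cu_n$, and must then show $f_n\to f$ in $L^1$, which requires the uniform-integrability estimate $\sup_n\int_\rrd u_n^\alpha\Phi^\vp dx<\9$ obtained from $\sup_n\int_\rrd\Phi u_n dx<\9$ via H\"older and $\Phi^{-m}\in L^1$ (the coercivity bound $\int_\rrd\eta(u)dx\ge-C_\alpha(\int_\rrd\Phi u\,dx+1)^\alpha$ being imported from \cite{4}). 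You instead absorb the negative part of $\eta$ into a fixed fraction of the confining term through the pointwise infimum $m_\theta(x)=\inf_{r\ge0}(\eta(r)+\theta\Phi(x)r)$, and the two-sided bounds $\frac{\gamma_2}{b_0}\log s\le\eta'(s)\le\frac{\gamma_1}{|b|_\9}\log s$ for $0<s\le1$ do give $r^*\le e^{-c\Phi}$, hence $|m_\theta|\le C(1+\Phi)e^{-c\Phi}\le C'\Phi^{-m}\in L^1$. This buys a single $n$-independent integrable minorant of the whole integrand, so the classical Fatou lemma applies directly, and the paper's estimate (4.0) drops out of the same computation, making the proof self-contained where the paper cites \cite{4}; both routes ultimately lean on $\Phi^{-m}\in L^1$, just in different places. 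One caveat, which you share with the paper's own proof: both arguments establish $\int_\rrd(\eta(u)+\Phi u)dx\le\liminf_nE(u_n)$, but identifying the left-hand side with $E(u)$ requires $u\in L^\9$, which neither argument verifies and which does not follow from $L^2$-convergence; this is harmless only if $E$ is understood as $\int_\rrd(\eta(u)+\Phi u)dx$ on all of $\calp\cap L^1(\rrd;\Phi\,dx)$, where your integrable minorant shows the integral is well defined in $(-\9,+\9]$.
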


\begin{proof} We first note that if $u\in\calp\cap L^\9\cap L^1(\rrd;\Phi dx)$, then by the proof of (4.6) in \cite{4} for all $\alpha\in [m/(m+1),1)$,  we have by Hypothesis (iv)
	$$\int_\rrd \eta(u)dx\ge-C_\alpha\(\int_\rrd\Phi u\,dx+1\)^\alpha,$$
	hence, since $r^\alpha\le\frac1{2C_\alpha}\,r+C'_\alpha,\ r\ge0,$
\begin{equation}\label{4.0}
	E(u)\ge\frac12\int_\rrd\Phi\,u\,dx-C''_\alpha
\end{equation}
	for some $C_\alpha,C'_\alpha,C''_\alpha\in(0,\9)$ independent of $u$.
	
	Let now $u,u_n\in L^2$, $n\in\nn$, such that $\lim\limits_{n\to\9}u_n=u$ in $L^2$. We may assume that
	$$\liminf_{n\to\9}E(u_n)=\lim_{n\to\9}E(u_n)<\9$$
	and that $E(u_n)<\9$ for all $n\in\nn$. Then, by \eqref{4.0} 
	\begin{equation}\label{4.0'}
		\sup_{n\in\nn}\int_\rrd\Phi\,u_n\,dx<\9.
	\end{equation}
	Now, suppose that
	\begin{equation}\label{4.0''}
		E(u)>\lim_{n\to\9} E(u_n).
	\end{equation}
Then
$$\barr{ll}
E(u)\!\!\!&>\dd\liminf_{n\to\9}\int_\rrd\eta(u_n)dx+
\liminf_{n\to\9}\int_\rrd\Phi\,u_n\,dx\vsp
&\ge\dd\liminf_{n\to\9}\int_\rrd\eta(u_n)dx+\int_\rrd\Phi\,u\,dx,\earr$$
where we applied Fatou's lemma to the second summand in the last inequality. If we can also apply it to the first, then we get a contradiction to \eqref{4.0''} and the lemma is proved. To justify the application of Fatou's lemma to the first summand, it is enough to prove that there exist $f_n\in L^1$, $n\in\nn$, $f_n\ge0$, such that (along a subsequence)
\begin{equation}\label{4.0'''}
f_n\to f\mbox{ in }L^1,
\end{equation}
and
\begin{equation}\label{4.0'v}
\eta(u_n)\ge-f_n, n\in\nn. 
\end{equation}
To find such $f_n$, $n\in\nn$, we use \eqref{4.0'}. Recall from (4.4) in \cite{4} that, for some $c\in(0,\9)$,
$$\eta(r)\ge-cr\log^-(r)-cr,\ r\ge0.$$Hence,
$$\eta(u_n)\ge-cu_n\log^-(u_n)-cu_n,\ n\in\nn.$$
Since $u_n\to u$ in $L^2$ and thus in $L^1_{\rm loc}$, it follows by\eqref{4.0'} and our assumptions on $\Phi$ that (again along a subsequence) $u_n\to u$ in $L^1$. Furthermore, for all $\alpha\in(0,1)$,
$$\barr{ll}
-f\log^-(r)\!\!\!&=1_{[0,1]}(r)r\log r
=1_{[0,1]}(r)r\,\frac1{1-\alpha}\,r^\alpha
\underbrace{r^{1-\alpha}\log r^{1-\alpha}}_{\ge-e\1}\vspace*{-3mm}\\
&\ge-\dd\frac1{(1-\alpha)e}\,r^\alpha,\ r\ge0.\earr$$
Hence, we find that
$$\eta(u_n)\ge-\frac c{(1-\alpha)e}\,u^\alpha_n-c u_n,\ n\in\nn.$$
But, since $u_n\to u$ in $L^2$ and thus $u^\alpha_n\to u^\alpha$ in $L^1_{\rm loc}$, by Hypothesis (iv) it remains to show that, for some $\vp,\alpha\in(0,1)$,
\begin{equation}\label{4.0v}
	\sup_{n\in\nn}\int_\rrd u^\alpha_n\,\Phi^\vp\, dx<\9,
	\end{equation}
to conclude that (along a subsequence) $u^\alpha_n\to u^\alpha$ in $L^1$, and then (again selecting a subsequence of $\{u_n\}$ if necessary) \eqref{4.0'''} and \eqref{4.0'v} hold with
$$f_n:=\frac c{(1-\alpha)e}\,u^\alpha_n+cu_n,\ n\in\nn.$$So, let us prove \eqref{4.0v}.

Applying H\"older's inequality with $p:=\frac1\alpha$, we find that
$$\int_\rrd u^\alpha_n\,\Phi^\vp\, dx\le\(\int_\rrd u_n\Phi\,dx\)^\alpha\(\int_\rrd\Phi^{-\(\frac1\alpha-\vp\)/(1-\alpha)}\,dx\)^{1-\alpha}.$$

Hence, choosing $\vp$ small enough and $\alpha$ close enough to $1$, so that\break $\(\frac1\alpha-\vp\)/(1-\alpha)\ge m$, Hypothesis (iv) implies \eqref{4.0v}.\end{proof}

By Lemma \ref{l4.1} we have for $E$ that its directional derivative
$$E'(u,z)=\lim_{\lbb\to0}\frac1\lbb(E(u+\lbb z)-E(u))$$
exists for all $u\in\calp^*$ and $z\in L^2$ (it is unambiguously either a real number or $+\9$) (see, e.g., \cite{3a}, p. 86). 

In the following, we shall take $u\in\calp^*\subset D(E)=\{u\in L^2;E(u)<\9\}$ and $z\in\calt_u(\calp)$ and obtain 	that 
\begin{equation}\label{e4.1a}
	\barr{lcl}
E'(u,z)&=&\dd\lim_{\lbb\downarrow0}\frac1\lbb(E(u+\lbb z)-E(u))\\
&=&\dd\int_\rrd  z(x)\(\int^{u(x)}_1\frac {\beta'(\tau)}{b^*(\tau)}\,d\tau+\Phi(x)\)dx.\earr\end{equation}

Moreover, the subdifferential $\pp E_u:L^2\to L^2$ of $E$ at $u$ is expressed as (see \cite[Proposition 2.39]{3a})
\begin{equation}\label{e4.1b}
\pp E_u=\{y\in L^2;\ (z,y)_2\le E'(u,z);\ \ff z\in L^2\}.
\end{equation}
We recall that if $E$ is G\^ateaux differentiable at $u$, then $\pp E_u$ reduces to the gradient $\nabla E_u$ of $E$ at $u$ and 
$$E'(u,z)=(\nabla E_u,z)_2,\ \ff z\in L^2.$$
Any element $y\in\pp E_u$ is called a {\it subgradient} of $E$ at $u$. In the following, we shall denote, for simplicity,  again by $\nabla E_u$ any subgradient of $E$ at $u$ and we shall keep the notation ${\rm diff}\,E_u\cdot z$ for $E'(u,z)$.	

If $z\in\calt_u(\calp^*)$ is of the form $z=z_2=-\divv(b^*(u)\nabla y_2),$ where $y_2\in C^\9_0(\rrd),$ then $z=-b^*(u)\Delta y_2-\nabla y_2\cdot(b'(u)u+b(u))\nabla u$ and so, by (i) and \eqref{e1.10b}, it follows that $z\in L^2$ and  hence

\begin{equation}\label{e4.2}
\barr{lcl}
E'(u,z)&=&{\rm diff}\,E_u\cdot z =\dd\lim_{\lbb\downarrow0}\frac1\lbb(E(u+\lbb z_2)-E(u))\vsp
&=&\dd\int_\rrd\(\frac{\nabla\beta(u(x))}{b^*(u(x))}-D(x)\)b^*(u(x))\cdot\nabla y_2(x)\,dx\vsp
&=&\dd\int_\rrd b^*(u(x))\nabla y_2(x)\cdot\nabla\(\int^{u(x)}_0\frac{\beta'(s)}{b^*(s)}\,ds+\Phi(x)\)dx.\earr
\end{equation}
We claim that
\begin{equation}\label{4.3'}
x\mapsto\int^{u(x)}_0\frac{\beta'(s)}{b^*(s)}\,ds+\Phi(x)\mbox{\ \ is in }W^{1,1}_{\rm loc}.\end{equation}
To prove this, we first note that by Hypotheses (i) and (ii)
$$\frac{\gamma_1}{|b|_\9}\ \frac1s\le\frac{\beta'(s)}{b^*(s)}\le\frac{\gamma_2}{b_0}\ \frac1s,\ s>0.$$Hence,
\begin{equation}\label{4.3''}
\frac{\gamma_1}{|b|_\9}\log u\le\int^u_0\frac{\beta'(s)}{b^*(s)}\,ds\le\frac{\gamma_2}{b_0}\log u.\end{equation}
Now, let $\psi\in\calx$ such that $\frac\psi u\in L^1$. then, for every compact $K\subset\rrd$ and $K_n:=\left\{\frac1n\le u\le1\right\}$, $n\in\nn$,
$$\barr{ll}
\dd\int_{K_n}(\log u)^-dx\!\!\!
&\le\(\dd\int_{K_n}(\log u)^2u\,dx\)^{\frac12}
\(\inf_K\psi\)^{-\frac12}
\(\int_K\frac\psi u\,dx\)^{\frac12}\vsp 
&\le\dd\sup_K((\log u)^-u)
\(\int_{K_n}(\log u)^-dx\)^{\frac12}
\(\inf_k\psi\)^{-\frac12}
\(\int_K\frac\psi u\,dx\)^{\frac12}.
\earr$$
Dividing by $\(\int_{K_n}(\log u)^-dx\)^{\frac12}$ and letting $n\to\9$ yields $\log u\in L^1_{\rm loc}$, since trivially $(\log u)^+\in L^1_{\rm loc}$, since $u\in L^\9$. Furthermore, for $\vp>0$,
$$\barr{ll}
\dd\int_K|\nabla\log(u+\vp)|dx\!\!\!
&\dd\int_K\frac{|\nabla u|}{u+\vp}\,dx
\le\(\dd\int_K\frac{|\nabla u|^2}u\,dx\)^{\frac12}\(\inf_K\psi\)^{-\frac12}
\(\int_K\frac\psi u\,dx\)^{\frac12}.
\earr$$
Letting $\vp\to0$ yields $|\nabla \log u|\in L^1_{\rm loc}$, and \eqref{4.3'} is proved by Hypothesis (iv). Hence,
$$E'(u,z_2)=\dd\int_\rrd b^*(u(x))\nabla y_2(x)\cdot\nabla  y_1(x)dx
=-\<z_1,z_2\>_u,$$
where $z_1=-{\rm div}(b^*(u)\nabla y_1),$ $y_1=\int^u_0\frac{\beta'(s)}{b^*(s)}\,ds+\Phi.$ Therefore,
$$z_2\mapsto E'(u,z_2)={\rm diff}\,E_u z_2=\<\nabla E_u,z_2\>$$
extends to all $z\in\calt_u(\calp^*)$ by continuity and 
 by \eqref{e3.2} it follows that for $u\in\calp^*$  any subgradient $\nabla_uE$ of $E$ is given by 
\begin{equation}
\label{e4.3}
\barr{ll}
\nabla E_u\!\!\!&=-
{\rm  div}\(b^*(u)
\nabla\!\(\dd\int^u_0\!
\frac{\beta'(s)}{b^*(s)}\,ds+\Phi\)\)\vsp &
=-\Delta\beta(u)+
{\rm div}(Db^*(u))\in H\1.\earr\end{equation}
In particular, this means that $\pp E_u$ is single valued and $\pp E_u=\nabla E_u.$ 

On the other hand, by Theorem \ref{t1} we know that, for $u_0\in\calp^*$ with $u_0\log u_0\in L^1$, we have for the flow $u(t)\equiv S(t)u_0$,
$$S(t)u_0\in H^1\cap\calp,\ \ff t>0,\ \frac{\nabla (S(t)u_0)}{\sqrt{S(t)u_0}}\in L^2\mbox{, a.e. $t>0,$}$$ 
\begin{eqnarray}
&&\dd\frac{d^+}{dt}\,S(t)u_0=\Delta\beta(S(t)u_0)-\divv(Db^*(S(t)u_0)),\ \ff t>0,\label{e4.4}\\[1mm]
&&\dd\frac d{dt}\,S(t)u_0=\Delta\beta(S(t)u_0)
-\divv(Db^*(S(t)u_0)),\ \ff t\in(0,\9)\setminus N,\qquad\label{e4.5}
\end{eqnarray}
where $N$ is at most countable set of $(0,\9)$. Moreover, if   ${u_0}\in\calp^*$, then, as seen in Theorem \ref{t1}, it follows that $S(t)u_0\in\calp^*$, $\ff t>0$,  and   $\nabla E_{u(t)}$ is well defined, a.e. $t>0$. Taking into account \eqref{e4.3}, we may rewrite \eqref{e4.4}-\eqref{e4.5} as the gradient flow on $\calp^*$ endowed with the metric tensor \eqref{e3.2}. Namely, we have
\begin{theorem}\label{t3.1} Under Hypotheses {\rm(i)--(iv)}, for each $u_0 \in \calp^*$,  
	the function $u(t)=S(t)u_0\in\calp^*$, $\ff t>0$, and it is the solution to the gradient flow 
\begin{eqnarray}
\frac d{dt}\,u(t)&=&-\nabla E_{u(t)}, \mbox{ a.e. }t>0,\label{e4.6b}\\
\frac{d^+}{dt}\,u(t)&=&-\nabla E_{u(t)},\ \ff t>0,\label{e3.6}\\
\frac{d}{dt}\,u(t)&=&-\nabla E_{u(t)},\ \ff t\in(0,\9)\setminus N,\label{e3.7}
\end{eqnarray}
where $N$ is  at most countable set   of $0,\9)$. 
\end{theorem}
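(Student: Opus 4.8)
The plan is to deduce Theorem \ref{t3.1} by assembling three facts already in place: the flow-invariance of $\calp^*$, the $H\1$-differentiability of $t\mapsto S(t)u_0$ furnished by Theorem \ref{t1}, and the identification \eqref{e4.3} of the subgradient $\nabla E_u$ with the operator $A^*u$. The crux is the \emph{pointwise} operator identity $\nabla E_u=A^*u=-\Delta\beta(u)+\divv(Db^*(u))$, valid for $u\in\calp^*$; once this holds along the whole orbit, inserting it into the differentiated semigroup relations of Theorem \ref{t1} turns the abstract evolution $\frac d{dt}u+A^*u=0$ into the asserted gradient flow.

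First I would fix $u_0\in\calp^*$ and invoke \eqref{e2.5a} to get $u(t)=S(t)u_0\in\calp^*$ for all $t\ge0$; this settles the invariance assertion and, by the definition \eqref{e1.10b} of $\calp^*$, yields $\sqrt{u(t)}\in H^1$, i.e. $\frac{\nabla u(t)}{\sqrt{u(t)}}\in L^2$, for every $t>0$. Next, for each such $t$ I would run the computation behind \eqref{e4.3}: since $u(t)\in\calp^*$, the function $y(t)=\int_0^{u(t)}\frac{\beta'(s)}{b^*(s)}\,ds+\Phi$ lies in $W^{1,1}_{\rm loc}$ by \eqref{4.3'}, and the membership requirement $\sqrt{u(t)}\,\nabla y(t)\in L^2$ for $\calt_{u(t)}(\calp^*)$ reduces, via the identity $\frac{\sqrt{u}}{b^*(u)}=\frac1{b(u)\sqrt{u}}$, the relation $D=-\nabla\Phi$ and the bounds $b\ge b_0$, $\beta'\le\gamma_2$, to the two facts $\frac{\nabla u(t)}{\sqrt{u(t)}}\in L^2$ and $\sqrt{u(t)}\,D\in L^2$ (the latter from $u(t)\in L^1$ and $D\in L^\9$). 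Hence $\nabla E_{u(t)}=-\divv(b^*(u(t))\nabla y(t))$ is a genuine element of $\calt_{u(t)}(\calp^*)$ and coincides with $A^*u(t)$.

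With $\nabla E_{u(t)}=A^*u(t)$ secured, the three identities become immediate substitutions: plugging it into the right-differentiability relation \eqref{e2.2} (valid for all $t>0$), into \eqref{e2.3} (valid for all $t\in(0,\9)\setminus N$), and into its almost-everywhere consequence turns $\frac{d^+}{dt}u(t)+A^*u(t)=0$ and $\frac d{dt}u(t)+A^*u(t)=0$ into \eqref{e3.6}, the pointwise relation on $(0,\9)\setminus N$, and \eqref{e4.6b}, respectively.

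The hard part will be guaranteeing that $\nabla E_{u(t)}$ is a well-defined tangent vector for \emph{every} $t>0$, not merely for a.e. $t$, so that the pointwise statement \eqref{e3.6} holds on all of $(0,\9)$; this is precisely the requirement $\sqrt{u(t)}\in H^1$ for all $t>0$, which I would extract from the full membership $u(t)\in\calp^*$ in \eqref{e2.5a}, backed by the smoothing effect \eqref{e2.12} and the entropy-dissipation bound \eqref{e2.17} that forces $\Psi(u(t))<\9$ and hence \eqref{e2.4}. As a preliminary remark I would record that $u_0\in\calp^*$ already forces $u_0\log u_0\in L^1$: since $u_0\in L^1(\rrd;\Phi\,dx)\cap L^\9$, combining the elementary bound $r\log^-(r)\le\frac1{(1-\alpha)e}\,r^\alpha$ with H\"older's inequality gives $\int_\rrd u_0^\alpha\,dx\le\(\int_\rrd u_0\Phi\,dx\)^\alpha\(\int_\rrd\Phi^{-\alpha/(1-\alpha)}\,dx\)^{1-\alpha}<\9$ for $\alpha\in[m/(m{+}1),1)$ by Hypothesis (iv), so that \eqref{e2.4} and \eqref{e2.17} indeed apply.
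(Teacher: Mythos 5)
Your proposal is correct and follows essentially the same route as the paper: the theorem is obtained by combining the invariance $S(t)\calp^*\subset\calp^*$ and the $H^{-1}$-differentiability statements \eqref{e2.2}--\eqref{e2.3} of Theorem \ref{t1} with the identification $\nabla E_u=-\Delta\beta(u)+\divv(Db^*(u))=A^*u$ established in \eqref{e4.2}--\eqref{e4.3}. Your two supplementary checks --- that $u_0\in\calp^*$ already implies $u_0\log u_0\in L^1$ (so the entropy estimates \eqref{e2.4}, \eqref{e2.17} apply) and that $\sqrt{u(t)}\,\nabla y(t)\in L^2$ makes $\nabla E_{u(t)}$ a genuine element of $\calt_{u(t)}(\calp^*)$ for every $t>0$ --- are points the paper leaves implicit, and they are verified correctly.
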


By \eqref{e3.2} we may rewrite \eqref{e3.6} as
\begin{equation}\label{e4.9a}
	\frac{d^+}{dt}\,E(S(t)u_0)=-\left\|\frac{d^+}{dt}\,S(t)u_0\right\|^2_{u(t)},\ \ff t>0.\end{equation}
 Equivalently,
\begin{equation}\label{e4.9}
	\frac{d^+}{dt}\,E(S(t)u_0)+A(S(t)u_0)=0,\ \ff t>0, 
	\end{equation}
where $A^*$ is the generator \eqref{e2.1} of the \FP\ semigroup $S^*(t)$ (equi\-va\-lently, $S(t)$) in $H\1$. Similarly, by \eqref{e3.2a} and \eqref{e2.17}--\eqref{e2.23a} we can write 
\begin{equation}\label{e4.10a}
\frac d{dt}\,E(S(t)u_0)=-\left\|\frac d{dt}\,S(t)u_0\right\|^2_{u(t)}=\Psi(S(t)u_0),\ \ff t\in(0,\9)\setminus N.\end{equation} 
As a matter of fact, the energy dissipation formula \eqref{e4.10a}  was used in \cite{4} (see also \cite{2}, Chapter 4) to prove that $S(t)u_0\to u_\9$ strongly in $L^1$ as \mbox{$t\to\9$}, where $u_\9$ is the unique solution to equilibrium equation $-\Delta\beta(u_\9)+{\rm div}(Db(u_\9)u_\9)=0$. 

\begin{remark}\rm Taking into account \eqref{e4.1a}, we see also that the operator $A^*$ defined by \eqref{e2.1} can be expressed as
	\begin{equation}\label{e4.7}
		A^*u=B_u\,{\rm diff}\,E_u,\ \ff u\in D(A^*)=H^1,
		\end{equation}
	where $B_u:H^1\to H\1$ is the linear symmetric operator defined by
	\begin{equation}\label{e4.8}
		\barr{rcl}
		B_u(y)&=&-{\rm div}(b^*(u)\nabla y),\ \ff y\in D(B_u),\vsp 
		D(B_u)&=&\{y\in l^2,\ \sqrt{u}\ \nabla {y}\in L^2\}.\earr
	\end{equation}
This means that $\nabla E_u$ can be equivalently written as
	\begin{equation}\label{e4.9}
	\nabla E_u=B_u({\rm diff}\ E_u). 
\end{equation}
\end{remark}

In the special case $b(r)\equiv1$,
$$E_u\equiv\int^u_1\frac{\beta'(\tau)}{\tau}\ d\tau+\Phi$$
and so $u(t)=S(t)u_0$ is the Wasserstein gradient flow of the functional $E$ defined by the time-discretized scheme
$$\barr{rcl}
u_h(t)&=&u^j_h,\ t\in[jh,(j+1)h),\ j=0,1,...,\vsp
u^j_h&=&\dd\min_u\left\{\frac1{2h}\ d_2(u,u^{j-1}_h)+E(u)\right\}\earr$$
where $d_2$ is the Wasserstein distance of order two (see \cite{0}, \cite{9aa}, \cite{9}). However, in the general case considered here, this is not the case.

\bk\n{\bf Acknowledgement.} This work was funded by the  DFG (German Research Foundation) Project ID 317210226-SFB 1283.

\end{document}